 \newtheorem{thm}{Theorem}[section]
 \newtheorem{prop}[thm]{Proposition}
 \theoremstyle{definition}
 \newtheorem{defn}[thm]{Definition}
 \theoremstyle{remark}
 \newtheorem{rem}[thm]{Remark}
 \newtheorem*{ex}{Example}
 \numberwithin{equation}{section}
\def\A{{\mathscr A}} \def\B{{\mathscr B}}  
  \def\H{\mathcal H} 
\def\S{\mathcal S} \def\G{\mathcal{G}}
\def\Si{\Sigma}
\def\I{{\rm 1\kern-.26em I}}
\def\1{\mathfrak{1}}
\def\M{\mathscr M}
\def\I{I\hspace{-1.7pt}I\hspace{-1pt}{\rm d}}
\def\({\left(}
\def\){\right)}
\def\[{\left[}
\def\]{\right]}
\def\<{\left<}
\def\>{\right>}
\begin{document}

%
%
%
%
%
%
%
%
%
\title{On Fr\'echet-Hilbert Algebras}

\date{\today}

\author{M. M\u antoiu and R. Purice \footnote{
\textbf{Primary  46L65, Secundary 35S05, 46K15.}
\newline
\textbf{Key Words:}  Quantization, Hilbert algebra, Fr\'echet space, Moyal algebra.}
}
\date{\small}
\maketitle \vspace{-1cm}








\begin{abstract}
We consider Hilbert algebras with a supplementary Fr\'echet topology and get various extensions of the algebraic structure by using duality techniques.
In particular we obtain  optimal multiplier-type involutive algebras, which in applications are large enough to be of significant practical use. The setting covers many situations arising from quantization rules, as those involving square-integrable families of bounded operators
\end{abstract}

\bigskip

\bigskip
{\bf Address}

\medskip
Departamento de Matem\'aticas, Universidad de Chile,

Las Palmeras 3425, Casilla 653, Santiago, Chile

\emph{E-mail:} mantoiu@uchile.cl

\emph{E-mail:} Radu.Purice@imar.ro	

\bigskip
\medskip
{\bf Acknowledgements:}
The authors have been supported by the Chilean Science Foundation {\it Fondecyt} under the Grant 1120300.
RP acknowledges the partial support of a grant of the Romanian National Authority for Scientific Research, CNCS-UEFISCDI, project number
PN-II-ID-PCE-2011-3-0131 and the hospitality of the Universidad de Chile where part of this work has been done.

\maketitle

\section*{Introduction}\label{intro}

Hilbert algebras \cite{Di1} (see also Definition \ref{hilbalg}) play an important role in the theory of von Neumann algebras, group representations and Tomita-Takesaki theory. Each Hilbert algebra defines canonically two semifinite von Neumann algebras, each one being the commutant of the other. Reciprocally, a von Neumann algebra endowed with a faithful normal semi-finite trace defines a Hilbert algebra. In physics they often arise in the context of quantization theory, being non-commutative deformations of some classical commutative structure. Having such physical applications in view, we deal with the problem of restricting and (especially) extending the algebraical information contained in a Hilbert algebra to different spaces. Our aim is to perform this in a model-independent way, with a minimal extra structure involved.

In many concrete situations, besides the Hilbert norm, there is a second stronger Fr\'echet topology compatible with the involutive algebra structure.
We codify the entire structure in the first section of the paper under the name {\it Fr\'echet-Hilbert algebra}. In the second section we show
that in such a setting the algebraic structure extends considerably by duality techniques. In particular, one gets naturally
an optimal multiplier-type algebra that we call {\it the Moyal algebra}. Such an object has been studied in connection with the Weyl pseudodifferential calculus \cite{An1,GBV1,GBV2}, starting with the natural algebraico-topological structure of the Schwartz space $\S(\mathbb R^n)$. An adaptation for the Gelfand-Shilov spaces is contained in \cite{So}\,. Moyal algebras for the magnetic pseudodifferential theory \cite{IMP} were introduced and used in \cite{MP1}.

As said before, the abstractization we propose here works under somewhat minimal assumptions and does not need the setting of spaces of functions or distributions. It also opens the way to some developments and applications that will be the subject of a forthcoming article, which will also contain complements on the relevant topologies on the Moyal algebras, a representation theory by operators in locally convex spaces and applications to the matrix-valued pseudodifferential theory on compact Lie groups initiated in \cite{RT1,RT2,RTW}.

In \cite{BBM} a very general form of a symbolic calculus has been introduced and studied. It is defined by a family of bounded Hilbert space operators
indexed by a space $\Si$ and having a property of square-integrability with respect to a measure $\mu$ on $\Si$. The main novelty is that no topology or group properties are involved in the development of this calculus. Besides the usual pseudodifferential theory on $\mathbb R^n$
\cite{Fo,GBV1,GBV2}, many topics are particular cases of this approach, as is shown in \cite{BBM}. This includes twisted convolution algebras associated to projective group representations, the magnetic pseudodifferential calculus \cite{IMP,KO,MP1,MP4}, Weyl operators on nilpotent groups \cite{BB1,BB4,BB5,Pe}, pseudodifferential operators on Abelian locally compact groups \cite{GS,Ha,Se,We} and others. In a final section we are going to review briefly the constructions of \cite{BBM}, putting them in the perspective of Fr\'echet-Hilbert algebras. In this way the formalism of Moyal algebras will become available to the examples covered in \cite{BBM}, which is a novelty for some of them.

\section{Fr\'echet-Hilbert algebras}\label{mapsymb}

\begin{defn}\label{hilbalg}
A Hilbert algebra {\rm is a $^*$-algebra $(\A,\#,^\#)$ endowed with a scalar product $\<\cdot,\cdot\>:\A\times\A\rightarrow\mathbb C$ such that
\begin{enumerate}
\item
one has $\,\<g^\#,f^\#\>=\<f,g\>,\ \forall\,f,g\in\A$,
\item
one has $\,\<f\#g,h\>=\<g,f^\#\# h\>,\ \forall\,f,g,h\in\A$,
\item
for all $f\in\A$, the map ${\sf L}_f:\A\rightarrow\A,\,{\sf L}_f(g):=f\#g$ is continuous,
\item
$\A\#\A$ is total in $\A$.
\end{enumerate}
A complete Hilbert algebra is called an} $H^*$-algebra.
\end{defn}

Clearly one also has
\begin{equation*}\label{treica}
\<f\#g,h\>=\<f,h\# g^\#\>,\ \quad\forall\,f,g,h\in\A
\end{equation*}
and the map ${\sf R}_f:\mathscr A\rightarrow\A,\,{\sf R}_f(g):=g\#f$
is also continuous; therefore $\A\times\A\overset{\#}{\rightarrow}\A$ is separately continuous.

The completion $\mathscr B$ of $\A$ is a Hilbert space but in general it is no longer an algebra.
But the mappings ${\sf L}_f$ and ${\sf R}_f$ do extend to elements of $\mathbb B(\B)$, defining nondegenerate commuting representations ${\sf L,R}:\A\rightarrow\mathbb B(\B)$. 
We can regard ${\sf L}$ and ${\sf R}$ as giving separately continuous bilinear extensions
\begin{equation*}\label{alta}
\A\times\B\overset{\#}{\longrightarrow}\B,\quad{\rm and}\quad\B\times\A\overset{\#}{\longrightarrow}\B.
\end{equation*}

By taking weak closures in $\mathbb B(\mathscr B)$ one gets von Neumann algebras $\mathcal L(\A)$ and $\mathcal R(\A)$
which are the commutant of each other.

An element $f$ of $\mathscr B$ is called {\it bounded} if the mapping $\A\ni g\rightarrow f\# g\in\mathscr B$ is continuous
(or, equivalently, if the mapping $\A\ni g\rightarrow g\# f\in\mathscr B$ is continuous).
We denote by $\A^\flat\subset\mathscr B$ the space of all bounded elements;
it becomes naturally a Hilbert algebra containing $\A$ densely.
One defines \cite{Di1} on $\mathcal L(\A)$ (resp. $\mathcal R(\A)$)
{\it a normal faithful semifinite trace} $\tau_L$ (resp. $\tau_R$)
for whom the finite-trace operators correspond to the elements of ${\mathsf L}_{\A^\flat}$ (resp. ${\mathsf R}_{\A^\flat}$).

\begin{defn}\label{stricat}
A Fr\'echet $^*$-algebra {\rm is a $^*$-algebra $(\A,\#,^\#)$ with a Fr\'echet locally convex space topology
$\mathscr T$ such that the involution
$$
\A\ni f\rightarrow f^\#\in \A
$$
is continuous and the product
$$
\A\times\A\ni(f,g)\rightarrow f\# g\in\A
$$
is separately continuous.}
\end{defn}

\begin{rem}\label{mishto}
{\rm By Theorem 41.2 in \cite{Tr}, separate continuity of the map $\#$ implies hypocontinuity. This means that for any bounded subset $A$
of $\A$ the families of linear maps
$$
\{\A\ni g\mapsto f\# g\in\A\mid f\in A\}\quad{\rm and}\quad\{\A\ni g\mapsto g\# f\in\A\mid f\in A\}
$$
are equicontinuous.
}
\end{rem}

On the dual $\A^\dagger$ one can consider various topologies
$\mathcal T_\nu$ which are stronger than the weak$^*$-topology $\mathcal T_\sigma$ but weaker than the strong topology $\mathcal T_\beta$.
Such a topology $\mathcal T_\nu$ will be called {\it admissible} if it has as a basis of neighborhoods of the origin the polars
of a family $\mathcal B_\nu$ of bounded subsets of $\A$ satisfying the following:
\begin{enumerate}
\item
if $A,B\in\mathcal B_{\nu}$, there exists $C\in\mathcal B_\nu$ such that $A\cup B\subset C$,
\item
if $A\in\mathcal B_\nu$ and $\alpha\in\mathbb C$, then $\alpha A\subset B$ for some $B\in\mathcal B_\nu$,
\item
if $A\in\mathcal B_\nu$ and $f\in \A$ then $f\# A\in\mathcal B_\nu$ and $A\# f\in\mathcal B_\nu$.
\end{enumerate}

Let us write $\A^\dag_\nu$ for the dual $\A^\dag$ when considered with the
topology $\mathcal T_\nu$. A net $\{F_\lambda\}_{\lambda\in\Lambda}$ converges to $0$ in $\A_\nu^\dag$ if and only if $F_\lambda(g)$ converges to $0$ uniformly in $g\in A$ for every $A\in\mathcal B_\nu$. Both the weak$^*$-topology $\mathcal T_\sigma$ and the strong topology
$\mathcal T_\beta$ are admissible topologies; for this one takes $\mathcal B_{\sigma}$ the family of all finite subsets of $\A$
and $\mathcal B_\beta$ the family of all bounded subsets of $\A$. Another interesting example is the family $\mathcal B_\gamma$
of all compact subsets of $\A$, leading to the topology $\mathcal T_\gamma$ of convergence which is uniform on compact sets.
By \cite[Th. 33.1]{Tr}, the bounded subsets of $\A^\dag_\nu$ are the same for all admissible topologies $\mathcal T_\nu$ (and are exactly the equicontinuous subsets).

\begin{rem}\label{pripri}
{\rm Weak duals of infinite-dimensional Fr\'echet spaces are definitely not complete. But it is known that they are quasi-complete.
On the other hand, the strong dual $\A^\dag_\beta$ (the dual $\A^\dag$ endowed with the strong topology $\mathcal T_\beta$ described above) of any Fr\'echet space is complete.
}
\end{rem}

We introduce now our main mathematical object.

\begin{defn}\label{tot}
A Fr\'echet-Hilbert algebra $(\A,\#,^\#,\mathscr T,\<\cdot,\cdot\>)$ {\rm is both a Fr\'echet $^*$-algebra and a Hilbert algebra,
the topology $\mathscr T$ being assumed finer that the norm topology associated to the scalar product.}
\end{defn}

As before, we denote by $\B$ the Hilbert space completion of $\A$; by the Riesz Lemma it is identified with its strong dual $\B^\dagger$.
We also denote by $\iota$ the canonical inclusion of $(\A,\mathscr T)$ into $(\B,\parallel\cdot\parallel)$; it is continuous and has dense range.
For each admissible topology $\mathcal T_\nu$ on the dual, using Riesz' identification, one gets a continuous linear injection
$\iota^\dagger:(\mathscr B,\parallel\cdot\parallel)\rightarrow(\A^\dagger,\mathcal T_\nu)$;
thus any Fr\'echet-Hilbert algebra $\A$ generates {\it a Gelfand triple} $(\A,\B,\A^\dagger_\nu)$.
Actually we are going to treat $(\A,\B,\A^\dagger)$ as the Gelfand triple and specify the topology $\mathcal T_\nu$ when needed. The duality between $\A$ and $\A^\dag$ will be denoted by $\<\cdot,\cdot\>$ because it is consistent with the scalar product of $\B$.

\begin{rem}\label{pripri}
{\rm Note that $\B$ (and hence $\A$) is dense in $\A^\dag_\sigma$ \cite[Prop. 35.4]{Tr}. 
}
\end{rem}

\section{Extensions of the product and Moyal algebras}\label{injurac}

Let us fix a Fr\'echet-Hilbert algebra $(\A,\#,^\#,\mathscr T,\<\cdot,\cdot\>)$ and an admissible topology $\mathcal T_\nu$ on the topological dual $\mathscr A^\dagger$, given by a family $\mathcal B_\nu$ of bounded subsets of $\A$ as above.

\begin{prop}\label{fosforos}
\begin{enumerate}
\item
The composition law $\#$ extends to bilinear separately continuous mappings
$\#:\A\times\A^\dagger_\nu\rightarrow\A^\dagger_\nu$ and $\#:\A^\dagger_\nu\times\A\rightarrow\A^\dagger_\nu$.
\item
For any $f\in\A^\dagger$ and $g,h\in\A$ one has
\begin{equation*}\label{urce}
f\#(g\# h)=(f\# g)\#h,
\end{equation*}
\begin{equation*}\label{urcu}
h\#(g\# f)=(h\# g)\#f,
\end{equation*}
\begin{equation*}\label{urca}
(g\# f)\# h=g\#(f\# h).
\end{equation*}
\item
The involution $^\#$ extends to a topological anti-linear isomorphism $^\#:\A_\nu^\dag\rightarrow\A_\nu^\dag$ which is an involution,
such that for every $f\in\A^\dag$ and $g\in\A$ one has
\begin{equation*}\label{stuchid}
(f\# g)^\#=g^\#\# f^\#,\quad\ (g\# f)^\#=f^\#\# g^\#.
\end{equation*}
\end{enumerate}
\end{prop}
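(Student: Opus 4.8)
The plan is to introduce every extension by the evident duality formula, verify that it lands in $\A^\dagger$, and then obtain separate continuity from the hypocontinuity of $\#$ (Remark \ref{mishto}) together with the three axioms defining an admissible family $\mathcal B_\nu$. I use throughout that the pairing $\<\cdot,\cdot\>$ extends the scalar product of $\B$, hence obeys on $\A$ the identities listed just after Definition \ref{hilbalg}. For $f\in\A^\dagger$ and $g,h\in\A$ I would set
\[
\<f\#g,h\>:=\<f,h\#g^\#\>,\qquad\<g\#f,h\>:=\<f,g^\#\#h\> .
\]
Since $h\mapsto h\#g^\#$ and $h\mapsto g^\#\#h$ are the continuous maps ${\sf R}_{g^\#}$ and ${\sf L}_{g^\#}$ on $\A$, the right-hand sides are continuous linear functionals of $h$, so indeed $f\#g,\,g\#f\in\A^\dagger$; and by the quoted Hilbert-algebra identities these formulas reduce to the original $\#$ when all arguments lie in $\A$ (more generally to the $\B$-valued products recalled in Section \ref{mapsymb}).

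Separate continuity in (1) splits into two points. First, continuity in the $\A^\dagger$-variable (with the $\A$-factor fixed) is continuity of a transpose: for $A\in\mathcal B_\nu$ one has $f\#g\in A^\circ$ exactly when $|\<f,h\#g^\#\>|\le1$ for all $h\in A$, i.e. when $f\in(A\#g^\#)^\circ$, and $A\#g^\#\in\mathcal B_\nu$ by axiom (3); thus $f\mapsto f\#g$ pulls a basic $\mathcal T_\nu$-neighbourhood back to one, and likewise $f\mapsto g\#f$ via $(g^\#\#A)^\circ$. Second, continuity in the $\A$-variable (with a dual element $f$ fixed) is where Remark \ref{mishto} really enters: choose a continuous seminorm $p$ with $|f(\cdot)|\le p(\cdot)$ and take $A\in\mathcal B_\nu$; the equicontinuity of $\{\phi\mapsto h\#\phi\mid h\in A\}$ gives a continuous seminorm $q$ with $p(h\#\phi)\le q(\phi)$ for all $h\in A$ and $\phi\in\A$, whence $|\<f\#g,h\>|=|\<f,h\#g^\#\>|\le q(g^\#)$ for $h\in A$, so $f\#g\in A^\circ$ once $q(g^\#)\le1$ — a neighbourhood of $0$ in $\A$ because $^\#$ is continuous. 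The map $g\mapsto g\#f$ is handled identically with the other equicontinuous family of Remark \ref{mishto}.

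Part (2) is then a mechanical unfolding: evaluate both sides on an arbitrary $k\in\A$, apply the two formulas above, and use associativity of $\#$ on $\A$ together with $(a\#b)^\#=b^\#\#a^\#$ in $\A$. For instance
\[
\<f\#(g\#h),k\>=\<f,(k\#h^\#)\#g^\#\>=\<(f\#g)\#h,k\> ,
\]
and the other two identities are obtained the same way; jointly they say that any product with at most one factor in $\A^\dagger$ is associative, so all bracketings coincide.

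For (3) I would define, for $f\in\A^\dagger$, the functional $f^\#$ by $\<f^\#,g\>:=\overline{\<f,g^\#\>}$. This lies in $\A^\dagger$ (it is $f$ precomposed with the continuous involution of $\A$, followed by complex conjugation), it is anti-linear, and $f^{\#\#}=f$ because $^\#$ is involutive on $\A$; it extends the involution of $\A$ since $\<f^\#,g\>=\overline{\<f,g^\#\>}$ already holds there, by axiom (1) of Definition \ref{hilbalg} and the conjugate-symmetry of the scalar product. Continuity for $\mathcal T_\nu$ is again a transpose statement — $f^\#\in A^\circ$ iff $f\in(A^\#)^\circ$ — so the only extra ingredient is that $A^\#$ be absorbed by some member of $\mathcal B_\nu$; as $^\#$ is a homeomorphism of $\A$ this is automatic for $\mathcal B_\sigma$, $\mathcal B_\gamma$, $\mathcal B_\beta$, and in general one need only add the harmless requirement that $\mathcal B_\nu$ be $^\#$-stable. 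The compatibility identities are verified by testing on $h\in\A$, e.g.
\[
\<(f\#g)^\#,h\>=\overline{\<f,h^\#\#g^\#\>}=\overline{\<f,(g\#h)^\#\>}=\<f^\#,g\#h\>=\<g^\#\#f^\#,h\> ,
\]
and similarly for $(g\#f)^\#$. I expect the only real difficulty to be bookkeeping: choosing the placement of $g$ versus $g^\#$ in the defining formulas so that they simultaneously restrict correctly to $\A$ and let axiom (3) drive the transpose-continuity argument; once that is fixed, all three parts run on this single transpose/hypocontinuity mechanism.
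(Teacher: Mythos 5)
Your proposal is correct, and for parts (1) and (3) it runs on exactly the mechanism the paper uses: the same transposed definitions $\<f\#g,h\>=\<f,h\#g^\#\>$, $\<g\#f,h\>=\<f,g^\#\#h\>$ and $\<f^\#,h\>=\overline{\<f,h^\#\>}$, with continuity in the dual slot coming from axiom (3) of admissibility (you phrase it via polars, the paper via nets --- equivalent) and continuity in the $\A$-slot coming from the hypocontinuity of Remark \ref{mishto} (again seminorms versus nets, same content). The one genuine divergence is part (2): the paper proves the three associativity identities by approximation, using that $\A$ is dense in $\A^\dag_\sigma$ together with the separate $\sigma$-continuity just established, whereas you unfold both sides against a test element $k\in\A$ and reduce directly to associativity and the anti-homomorphism property of $^\#$ inside $\A$; your route is equally valid and somewhat more self-contained, since it needs no density or limit argument, while the paper's buys brevity once continuity is in hand. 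Your remark on the continuity of $^\#$ on $\A^\dag_\nu$ is also well taken: the paper simply asserts that $\mathcal B_\nu$ is stable under the involution, although this is not among the three listed axioms of an admissible family, so your observation that one should either add $^\#$-stability (harmless, and automatic for $\mathcal B_\sigma$, $\mathcal B_\gamma$, $\mathcal B_\beta$) or note it as an implicit hypothesis is, if anything, more careful than the original.
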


\begin{proof}
1. We are going to justify only the second extension; the first one follows in the same way. For $f\in\A^\dag$ and $g,h\in\A$ one sets
\begin{equation*}\label{diul}
\<f\# g,h\>:=\<f,h\# g^\#\>.
\end{equation*}
Clearly this defines an element $f\# g$ of the topological dual of $\A$, which coincides with the one given by the Hilbert algebra structure of $\A$ if $f\in\A$.

We still have to show separate continuity. 

First fix $g\in\A$ and pick a net $\{f_\lambda\}_{\lambda\in\Lambda}$ converging to $0$ in $\A^\dag_\nu$.
This means that for every $B\in\mathcal B_\nu$ one has $\<f_\lambda,k\>\rightarrow 0$ uniformly in $k\in B$. 
For any $A\in\mathcal B_\nu$, since $B:=A\#g^\#\in\mathcal B_\nu$, one has 
$$
\<f_\lambda\# g,h\>=\<f_\lambda,h\# g^\#\>\rightarrow 0\ \,{\rm uniformly\ in}\ h\in A,
$$
thus $f_\lambda\# g\rightarrow 0$ in $\A^\dag_\nu$.

Now fix $f\in\A^\dag$ and assume that $g_\lambda\rightarrow 0$ in $\A$.
For any $A\in\mathcal B_\nu$ it follows from Remark \ref{mishto} that $h\# g_\lambda^\#\rightarrow 0$ uniformly in $h\in A$. Then
$$
\< f\# g_\lambda,h\>=\<f,h\# g^\#_\lambda\>\rightarrow 0\ \,{\rm uniformly\ in}\ h\in A,
$$
thus $f\# g_\lambda\rightarrow 0$ in $\A^\dag_\nu$ and we are done.

\medskip
2. Recall that $\A$ is dense in $\A^\dag_\sigma$. Then the associativity properties  follow easily
 by approximation from the associativity of the composition law $\#$ in $\A$ and from the continuity property we have just proved, with $\nu=\sigma$.

\medskip
3. The extension of $^\#$ is defined by transposition: if $f\in\A_\nu^\dag$ one sets
$$
\<f^\#,h\>:=\overline{\<f,h^\#\>},\quad\ \forall\,h\in\A.
$$
It follows immediately that it is an involution. It is a topological isomorphism for any $\nu$ because the family $\mathcal B_\nu$ is stable under the involution of $\mathscr A$. For $f\in\A^\dag$ and $g,h\in\A$ we compute using the definition of the involution and the axioms of a Hilbert algebra
$$
\begin{aligned}
\<(f\# g)^\#,h\>&=\overline{\<f\# g,h^\#\>}=\overline{\<f,h^\#\# g^\#\>}=\overline{\<(h^\#\# g^\#)^\#,f^\#\>}\\
&=\overline{\<g\# h,f^\#\>}=\<f^\#,g\#h\>=\<g^\#\# f^\#,h\>,
\end{aligned}
$$
so the first equality is proven. The second follows similarly.
\end{proof}

\begin{defn}
{\rm Let $\mathscr A$ be a Fr\'echet-Hilbert algebra with topological dual $\A^\dagger$\,. One introduces}
\begin{enumerate}
\item {\it the right Moyal algebra}
$\,\M_R:=\left\{f\in\A^\dagger\,\mid\, \A\# f\subset \A\right\}$,

\item {\it The left Moyal algebra} $\,\M_L:=\left\{f\in\A^\dagger\,\mid\, f\#\,\A\subset \A\right\}$,
\item {\it The (bi-sided) Moyal algebra}

$\,\M:=\M_R\cap \M_L=\left\{f\in\A^\dagger\,\mid\, f\#\,\A\subset \A\supset\A\# f\right\}$.
\end{enumerate}
\end{defn}

\noindent
Thus we have bilinear maps
\begin{equation*}\label{oaial}
\A\times\M_R\overset{\#}{\longrightarrow}\A,\quad\ \M_L\times\A\overset{\#}{\longrightarrow}\A.
\end{equation*}
If $f\in\A,\,g\in\M_L$ and $h\in\M_R$, by using Proposition \ref{fosforos} and the definitions we check immediatly that the identity $(g\# f)\# h=g\#(f\# h)$ holds in $\A$. One also shows easily that $\M_R^\#=\M_L$ and $\M_L^\#=\M_R$, implying that
$\M^\#=\M$.

\begin{prop}\label{ptak}
One gets bilinear extensions
\begin{equation*}\label{bil}
\M_R\times\A^\dag\overset{\#}{\longrightarrow}\A^\dag,\ \quad\A^\dag\times\M_L\overset{\#}{\longrightarrow}\A^\dag.
\end{equation*}
If $f\in\A^\dag,\,g\in\M_R$ and $h\in\M_L$ we have
$$
(g\# f)\# h=g\#(f\# h).
$$
\end{prop}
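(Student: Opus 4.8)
The plan is to define the two extensions by transposing, for each element of $\M_R$ (resp. $\M_L$), the associated multiplication operator on $\A$, and then to obtain the associativity identity by pairing with vectors of $\A$ and invoking the associativity relation already available inside $\A$. The first ingredient is a continuity statement: if $g\in\M_R$ then $g^\#\in\M_R^\#=\M_L$, so the linear map ${\sf L}_{g^\#}\colon\A\ni h\mapsto g^\#\# h\in\A$ is well defined, and I claim it is continuous. Since $(\A,\mathscr T)$ is a Fréchet space, by the closed graph theorem it is enough to check that its graph is closed: if $h_n\to h$ and $g^\#\# h_n\to l$ in $\A$, then, $\mathscr T$ being finer than the norm, $\<h_n,k\>\to\<h,k\>$ for all $k\in\A$, i.e. $h_n\to h$ in $\A^\dag_\sigma$, whence $g^\#\# h_n\to g^\#\# h$ in $\A^\dag_\sigma$ by the separate continuity of $\#\colon\A^\dag_\sigma\times\A\to\A^\dag_\sigma$ from Proposition \ref{fosforos}; since also $g^\#\# h_n\to l$ there and $\A^\dag_\sigma$ is Hausdorff, $l=g^\#\# h$. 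Symmetrically, for $h\in\M_L$ one has $h^\#\in\M_R$ and the map ${\sf R}_{h^\#}\colon\A\ni k\mapsto k\# h^\#\in\A$ is continuous.

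Next I would define, for $g\in\M_R$, $f\in\A^\dag$ and $h\in\M_L$,
\begin{equation*}
\<g\# f,k\>:=\<f,g^\#\# k\>,\qquad\<f\# h,k\>:=\<f,k\# h^\#\>\qquad(k\in\A),
\end{equation*}
which makes sense because $g^\#\# k$ and $k\# h^\#$ lie in $\A$. By the continuity of ${\sf L}_{g^\#}$ and ${\sf R}_{h^\#}$ these are continuous linear functionals of $k$, so $g\# f$ and $f\# h$ do belong to $\A^\dag$; bilinearity is clear, and a short computation with the Hilbert algebra axioms and Proposition \ref{fosforos}(3) shows that on common domains of definition these maps restrict to the ones constructed in Proposition \ref{fosforos} (for instance, for $f\in\A$ the second formula is already the one of Proposition \ref{fosforos}). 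One also gets separate continuity with respect to any admissible $\mathcal T_\nu$, argued exactly as in the proof of Proposition \ref{fosforos} via property (3) of $\mathcal B_\nu$ and Remark \ref{mishto}.

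For the associativity identity, observe that $(g\# f)\# h$ and $g\#(f\# h)$ both lie in $\A^\dag$, so it suffices to evaluate on an arbitrary $k\in\A$. Since $h^\#\in\M_R$ we have $k\# h^\#\in\A$, and unwinding the two definitions gives
\begin{equation*}
\<(g\# f)\# h,k\>=\<g\# f,k\# h^\#\>=\<f,g^\#\#(k\# h^\#)\>;
\end{equation*}
likewise, since $g^\#\in\M_L$ we have $g^\#\# k\in\A$, and
\begin{equation*}
\<g\#(f\# h),k\>=\<f\# h,g^\#\# k\>=\<f,(g^\#\# k)\# h^\#\>.
\end{equation*}
Because $g^\#\in\M_L$, $k\in\A$ and $h^\#\in\M_R$, the associativity relation already proved inside $\A$ (the one displayed just after the definition of the Moyal algebras) yields $g^\#\#(k\# h^\#)=(g^\#\# k)\# h^\#$, so the two expressions agree for every $k\in\A$, which proves the identity.

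The only genuine point in this argument is the continuity of the maps ${\sf L}_{g^\#}$ and ${\sf R}_{h^\#}$ on $\A$ — for which the closed graph theorem together with the weak$^*$ continuity from Proposition \ref{fosforos} is exactly what is needed; everything else is bookkeeping, the only mild care being the placement of the involutions in the defining formulas so that these maps genuinely extend the ones already constructed.
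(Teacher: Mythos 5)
Your proposal is correct and follows essentially the same route as the paper: define the extensions by transposition ($\<g\# f,k\>=\<f,g^\#\# k\>$ and its mirror), obtain continuity of $h\mapsto g^\#\# h$ (for $g^\#\in\M_L$) from the Closed Graph Theorem, and deduce the associativity identity by pairing with elements of $\A$ and invoking the mixed associativity already established after the definition of the Moyal algebras. You merely spell out the details the paper leaves implicit (the closedness of the graph via weak$^*$ convergence, and the explicit associativity computation), which is fine.
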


\begin{proof}
We treat the first one. For $f\in\M_R$ and $g\in\A^\dag$ define $f\# g\in\A^\dag$ by
\begin{equation*}\label{incerc}
\<f\# g,h\>:=\<g,f^\#\# h\>,\quad\ \forall\,h\in\A.
\end{equation*}
The right-hand side is clearly linear in $h$, but we also need to justify continuity.
By a simple application of the Closed Graph Theorem, if $k\in\M_L$ then $\A\ni h\mapsto k\# h\in\A$ is continuous.
Taking $k=f^\#$ and recalling that $g$ is a continuous functional, we conclude that $f\# g\in\A^\dag$ indeed.

Associativity follows if we apply the definitions and the associativity properties already obtained.
\end{proof}

We gather the basic properties of $\M_L,\M_R$ and $\M$ in the following statement.

\begin{prop}\label{complecar}
\begin{enumerate}
\item
Both $\M_L$ and $\M_R$ are algebras.
\item
The involution $^\#$ on $\A^\dagger$ restricts to reciprocal anti-linear isomorphisms $^\#:\M_L\rightarrow\M_R$ and $\,^\#:\M_R\rightarrow\M_L$ satisfying
$$
(f\# g)^\#=g^\#\# f^\#,\ \ \ \ \ \forall f,g\in \M_L \ (or\ \forall f,g\in \M_R).
$$
\item
$\M$ is an involutive algebra.
\item
One has $\A\#\M_R\subset\A$ and $\M_L\#\A\subset\A$\,, thus $\A$ a self-adjoint bi-sided ideal in the $^*$-algebra $\M$.
\end{enumerate}
\end{prop}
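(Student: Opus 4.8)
The plan is to prove the four assertions roughly in the order (4), (2), (1), (3), each relying on the previous ones. Item (4) is essentially definitional: by the very definitions $\M_R=\{f\in\A^\dagger:\A\#f\subset\A\}$ and $\M_L=\{f\in\A^\dagger:f\#\A\subset\A\}$ one has $\A\#\M_R\subset\A$ and $\M_L\#\A\subset\A$ at once, and $\A^\#=\A$ because $\A$ is a $^*$-algebra; granting (1)--(3), so that $\M$ is a $^*$-algebra, one then gets $\A\#\M\subset\A\#\M_R\subset\A$ and $\M\#\A\subset\M_L\#\A\subset\A$, whence $\A=\A^\#$ is a self-adjoint two-sided ideal of $\M$. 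For item (2) I would first check that $f\in\M_L$ if and only if $f^\#\in\M_R$: if $f\#\A\subset\A$ then for every $a\in\A$ one has $a\#f^\#=(f\#a^\#)^\#\in\A^\#=\A$ by Proposition \ref{fosforos}(3), so $\A\#f^\#\subset\A$, and conversely; since $^\#$ is an anti-linear involution on $\A^\dagger$, it then restricts to reciprocal anti-linear isomorphisms between $\M_L$ and $\M_R$. The identity $(f\#g)^\#=g^\#\#f^\#$ for $f,g\in\M_L$ (and likewise for $\M_R$) is obtained by evaluating both members on an arbitrary $h\in\A$: using the defining formula of the appropriate extension of Proposition \ref{ptak} together with Proposition \ref{fosforos}(3), both sides collapse to $\overline{\langle f,h^\#\#g^\#\rangle}$.

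For item (1) I treat $\M_R$; the case of $\M_L$ is symmetric (or follows by applying $^\#$ and (2)). Given $f,g\in\M_R$, the product $f\#g\in\A^\dagger$ is the one furnished by Proposition \ref{ptak} with $g$ regarded as an element of $\A^\dagger$. To see that $f\#g\in\M_R$, fix $a\in\A$ and prove the identity $a\#(f\#g)=(a\#f)\#g$; the right-hand side lies in $\A$ since $a\#f\in\A$ and $\A\#\M_R\subset\A$. This identity is checked by pairing with an arbitrary $h\in\A$: unwinding the defining formulas of $\#:\A\times\A^\dagger\to\A^\dagger$ and $\#:\M_R\times\A^\dagger\to\A^\dagger$, and using Proposition \ref{fosforos}(3) together with the mixed associativity $(u\#b)\#v=u\#(b\#v)$ for $u\in\M_L$, $b\in\A$, $v\in\M_R$ recalled just after the definition of the Moyal algebras, both members reduce to $\langle g,f^\#\#(a^\#\#h)\rangle$. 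Associativity of $\#$ on $\M_R$ is then inherited from the associativity relations already available in $\A^\dagger$.

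For item (3), $\M^\#=\M$ is immediate from (2). For closedness: given $f,g\in\M$, form $f\#g\in\A^\dagger$ as in Proposition \ref{ptak} (with $f\in\M_R$, $g\in\A^\dagger$); it lies in $\M_R$ by (1), and a computation like that in (1), now using $f,g\in\M$, shows that $(f\#g)\#a=f\#(g\#a)\in\A$ for every $a\in\A$, so $f\#g\in\M_L$ as well and hence $f\#g\in\M$. Associativity and the involution law then transfer to $\M$ from (1), (2), and the associativity relations of the preceding propositions. The one point demanding genuine care---and the step I expect to be the main obstacle---is that the product so defined on $\M$ is \emph{unambiguous}: for $f\in\M_R$ and $g\in\M_L$, the element $f\#g$ given by the extension $\M_R\times\A^\dagger\to\A^\dagger$ must coincide with the one given by $\A^\dagger\times\M_L\to\A^\dagger$, for otherwise the identity $(f\#g)^\#=g^\#\#f^\#$ on $\M$ would hold only in a mixed form. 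To establish this compatibility I would pair the two candidates for $f\#g$ with an arbitrary product $p\#q\in\A\#\A$ and collapse both pairings---via the mixed associativities and the Hilbert-algebra axioms---to one and the same scalar, $\langle f\#(g\#q^\#),p\rangle$; since $\A\#\A$ is total in $\A$, a density argument then forces the two continuous functionals on $\A$ to agree. Granted this, items (1), (2) together with Propositions \ref{fosforos} and \ref{ptak} complete the proof.
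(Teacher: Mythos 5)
Your treatment of items (1), (2) and (4) is correct and essentially reproduces the paper's own route: everything is verified weakly against test vectors $h\in\A$ by unwinding the defining formulas of Propositions \ref{fosforos} and \ref{ptak} (the paper carries out item (1) for $\M_L$, you do the symmetric case $\M_R$; your reductions, e.g. both sides of $(f\# g)^\#=g^\#\# f^\#$ collapsing to $\overline{\<f,h^\#\# g^\#\>}$, and the equivalence $f\in\M_L\Leftrightarrow f^\#\in\M_R$, are fine).

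The problem lies in the step you yourself single out as the main obstacle: the claim that for $f\in\M_R$ and $g\in\M_L$ the two candidates for $f\# g$ (coming from $\M_R\times\A^\dag\rightarrow\A^\dag$ and from $\A^\dag\times\M_L\rightarrow\A^\dag$) coincide. Your computation that they agree on every product $p\# q\in\A\#\A$ is correct, but the concluding density argument does not go through: axiom 4 of Definition \ref{hilbalg} gives totality of $\A\#\A$ in $\A$ only with respect to the norm of the scalar product, whereas the two functionals $h\mapsto\<g,f^\#\# h\>$ and $h\mapsto\<f,h\# g^\#\>$ are only continuous for the finer Fr\'echet topology $\mathscr T$ (via the Closed Graph argument of Proposition \ref{ptak}); a $\mathscr T$-continuous functional need not be norm-continuous, so agreement on a norm-dense subspace does not force agreement on all of $\A$. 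What you would need is $\mathscr T$-density of the linear span of $\A\#\A$, which is not among the axioms (it does hold in the standard examples, but then it is an additional hypothesis); the weak-$*$ density of $\A$ in $\A^\dag$ does not rescue the argument either, because $g\mapsto\<f,h\# g^\#\>$ is not weak-$*$ continuous for a general fixed $f\in\A^\dag$. Note that the paper sidesteps the issue by reading each product through its own extension: for $f,g\in\M_L$ the product $f\# g$ is taken in $\A^\dag\times\M_L$ and $g^\#\# f^\#$ in $\M_R\times\A^\dag$, and with these readings your verification of (2) already closes without any compatibility statement, item (3) then being obtained by intersection exactly as in the paper. If you insist on a single unambiguous product on $\M$ (a legitimate concern the paper passes over in silence), you must either assume the $\mathscr T$-density of $\A\#\A$ or find a genuinely different argument; as it stands, this step of your proposal is not justified.
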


\begin{proof}
1. We treat $\M_L$. Let $f,g\in\M_L$ and $h\in\A$; by Proposition \ref{ptak} we need to show that $(f\# g)\# h\in\A$. This follows from the definition of
$\M_L$ if the equality $(f\# g)\# h=f\#(g\# h)$ is established. We are going to get it working weakly on any $k\in\A$:
$$
\begin{aligned}
\<(f\# g)\# h,k\>&=\<f\# g,k\#h^\#\>=\<f,(k\# h^\#)\# g^\#\>\\
&=\<f,k\#(h^\#\# g^\#)\>=\<f,k\#(g\# h)^\#\>\\
&=\<f\#(g\# h),k\>.
\end{aligned}
$$
During the computation we used an associativity relation that is already known.

There is still some associativity to prove, but we leave it to the reader.

\medskip
2. This has been mentioned above and is easy.

\medskip
3 follows from 1 and 2, since $\M$ is the intersection of $\M_L$ and $\M_R$.

\medskip
4 is obvious.
\end{proof}

\begin{rem}\label{prip}
{\rm By inspection it can be shown that the locally convex space $(\A,\mathscr T)$ only needs to be barrelled and $B_r$-convex \cite{Sch}, the main issue being the validity of the Closed Graph Theorem. This extra generality, needed to cover Gelfand-Shilov spaces in connection with the Weyl calculus \cite{So}, will not be considered here
}
\end{rem}

\begin{rem}\label{pripi}
{\rm In \cite{GBV1,MP1,So}, studying the particular case of the (magnetic) Weyl composition on Schwartz or Gelfand Shilov spaces, it is shown that the Moyal algebras are much larger than the initial $\A$. In the Weyl quantization, many symbols from the Moyal algebra are turned into unbounded operators on $L^2$.
}
\end{rem}

\begin{rem}\label{pipri}
{\rm However the Hilbert space $\B$ and the Moyal algebras are not comparable in general. For the Weyl pseudodifferential calculus with $\A=\S(\mathbb R^{2n})$ one has $\B=L^2(\mathbb R^{2n})$ and $\A^\dagger$ is the space of tempered distributions. In the Schr\"odinger representation, the Weyl correspondence maps isomorphically $\B$ to the ideal of Hilbert-Schmidt operators in $L^2(\mathbb R^n)$, while $\M_L$
corresponds to linear continuous operators in $\S(\mathbb R^n)$ and $\M_R$ corresponds to linear continuous operators in $\S'(\mathbb R^n)$.
Then clearly no inclusion is available (think of rank-one operators $|u\rangle\langle v|$ with various types of vectors $u,v$).
}
\end{rem}

\begin{ex}\label{rice}
{\rm Many examples of Fr\'echet-Hilbert algebras are constructed from the Gelfand triple
$\left(\A=\mathcal S(\mathbb R^m),\,\mathscr B=L^2(\mathbb R^m),\,\A^\dag=\mathcal S'(\mathbb R^m)\right)$,
where $\mathcal S(\mathbb R^m)$ is the Schwartz space endowed both with the $L^2$ scalar product and with its standard Fr\'echet topology
and $\mathcal S'(\mathbb R^m)$ is the space of all tempered distributions on $\mathbb R^m$. The simplest non-trivial situation
is $f\# g=fg$ (pointwise product) and $f^\#=\overline{f}$ (complex conjugation). In this case one has
$\A^\flat=L^2(\mathbb R^m)\cap L^\infty(\mathbb R^m)$, $\mathcal L(\A)=\mathcal R(\A)\cong L^\infty(\mathbb R^m)$
and $\mathscr M_L=\mathscr M_R=\mathscr M=C^\infty_{{\rm pol}}(\mathbb R^m)$
($C^\infty$ functions with polynomially bounded derivatives).
One sees from this example that although $\mathscr M$ can be quite big (containing all the H\"ormander classes of symbols \cite{Fo}), it does not contain the Hilbert space $\mathscr B$ or at least
$\A^\flat$. It is also clear that $\A\#\A^\dag$ is not contained in any of the Moyal algebras.}
\end{ex}

\section{Fr\'echet-Hilbert algebras associated to square-integrable families of bounded operators}\label{endix}

We are given a measure space $(\Si,\mu)$ (as a locally compact topological space endowed with a Radon measure, for instance). Let $\{\pi(s)\!\mid\! s\in\Si\}\subset\mathbb B(\H)$ be a family of bounded operators in the separable complex Hilbert space $\H$. We assume that $s\to\pi(s)$ is weakly measurable and satisfies the condition
$$
\int_\Si\! d\mu(s)\,|\<\pi(s)u,v\>_\H|^2=\,\parallel\!u\!\parallel^2\,\parallel\!v\!\parallel^2,\quad\forall\,u,v\in\H.
$$
The space $\Si$ is not a group and we don't know anything on the products $\pi(s)\pi(t)$. We are going to describe some constructions from \cite{BBM}, to which we send for proofs, technical details and further information.

Let us set $\Phi_{u\otimes v}(\cdot):=\<\pi(\cdot)u,v\>$; this defines an isomorphism from $\H\otimes\overline\H$ to a closed subspace $\mathscr B^2(\Si)$ of $L^2(\Si)$ (we set $\overline\H$ for the opposite of the Hilbert space $\H$ and $\otimes$ for the Hilbert space tensor product). In many (but not all) particular cases one has $\mathscr B^2(\Si)=L^2(\Si)$. Then a correspondence $\Pi:\mathscr B^2(\Si)\rightarrow \mathbb B(\H)$ is defined essentially by
$$
\Pi(f):=\int_\Si\!d\mu(s)f(s)\pi(s)^*.
$$
This correspondence sends isomorphically $\mathscr B^2(\Si)$ in $\mathbb B_2(\H)$ (the ideal of all Hilbert-Schmidt operators on $\H$).
Actually one has
$$
\<\Pi(f),\Pi(g)\>_{\mathbb B^2(\H)}:={\rm Tr}[\Pi(f)\Pi(g)^*]=\int_\Si\!d\mu(s)f(s)\overline{g(s)}=:\<f,g\>_{L^2(\Si)}.
$$
The mapping $\Pi$ is uniquely defined by the relation $\Pi(\Phi_{u\otimes v})=\<\cdot,v\>u$, specifying the way we obtain the rank one operators.
Being a closed subspace of $L^2(\Si)$, the set of "symbols" $\mathscr B^2(\Si)$ is a Hilbert space. We complete this structure to a $H^*$-algebra (Def. \ref{hilbalg}) $\(\mathscr B^2(\Si),\#,^\#,\<\cdot,\cdot\>_{L^2(\Si)}\)$
by transporting the $^*$-algebra structure of $\mathbb B(\H)$. So the product is defined by
$f\#g:=\Pi^{-1}[\Pi(f)\Pi(g)]$ and the involution by $f^\#:=\Pi^{-1}[\Pi(f)^*]$.
More explicit expressions are deduced in \cite{BBM} but they are not needed here.

Suppose now given a Fr\'echet space $\G$ continuously and densely embedded in the Hilbert space $\H$. This is very common in applications, where $\H$ might be a $L^2$-space, $\G$ could be a space of more regular functions, with a natural Fr\'echet topology, and the topological dual $\G^\dagger$ is some space of distributions. No compatibility between $\G$ and the basic family $\{\pi(s)\!\mid\! s\in\Si\}$ is needed. Let us denote by $\G\widehat\otimes\overline\G$ the completed projective tensor product and define $\mathscr G(\Si):=\Phi(\G\widehat\otimes\overline\G)$ with the  topology transported by $\Phi$ from $\G\widehat\otimes\overline\G$. It is shown in \cite{BBM} that $\mathscr G(\Si)$ is a Fr\'echet-Hilbert algebra; thus we can take $\A=\mathscr G(\Si)$ and $\mathscr B=\mathscr B^2(\Si)$ in the preceding sections.

\begin{rem}\label{puzau}
{\rm Actually the Fr\'echet-Hilbert algebra described above admits a representation in the Gelfand triple $(\mathcal G,\mathcal H,\mathcal G^\dag)$. By definition, such a representation is an isomorphism $\Pi:\A^\dag\rightarrow\mathbb B(\mathcal G,\mathcal G^\dag)$ which restricts to an isomorphism $\Pi:\A\rightarrow\mathbb B(\mathcal G^\dag,\mathcal G)$ and satisfies
$\Pi(f\# g)=\Pi(f)\Pi(g)$ and $\Pi(f)^*=\Pi(f^\#)$ for all $f,g\in\A$.
In such a setting one shows easily that $\A\#\A^\dag\#\A\subset\A$, which implies immediately that $\A\#\A^\dag\subset\mathscr M_L$ and $\A^\dag\#\A\subset\mathscr M_R$. As noticed in Example \ref{rice}, this does not hold for all Fr\'echet-Hilbert algebras.
}
\end{rem}

Concrete examples are given in \cite{BBM}, along the lines described in the Introduction; it can be seen that, although $\mathscr G(\Si)$ is defined indirectly, in applications it coincides with some known useful Fr\'echet space. However its $^*$-algebra structure can be highly non-trivial. Our approach makes available the Moyal algebra formalism for all these situations.


\end{document}